\numberwithin{equation}{section} 
\theoremstyle{plain}
\newcommand{\C}{\mathbb C}
\newcommand{\R}{\mathbb R}
\newcommand{\Z}{\mathbb Z}
\newtheorem{thm}{Theorem}[section]
\newtheorem{lem}[thm]{Lemma}
\newtheorem{pro}[thm]{Proposition}
\newtheorem{rk}[thm]{Remark}
\begin{document}

\title[Volume preserving diffeomorphisms ]
{\bf Volume preserving diffeomorphisms with weak and limit weak
shadowing }



\author{Manseob Lee
 }
\address
     { Manseob Lee :  Department of Mathematics \\
       Mokwon University, Daejeon, 302-729, Korea
      }
\email{lmsds@mokwon.ac.kr  }

\thanks{
 2000 {\it Mathematics Subject
Classification.}
37C05, 37C29,  37C20, 37C50. \\
\indent {\it Key words and phrases.} weak shadowing, limit weak
shadowing, hyperbolic, Anosov, volume-preserving.
\newline
\indent This research was supported by Basic Science Research
Program through the National Research Foundation of Korea(NRF)
funded by the Ministry of Education, Science and Technology (No.
2011-0007649).}

\begin{abstract}
Let $f$ be a volume-preserving diffeomorphism of a closed
$C^{\infty}$ two-dimensional Riemannian manifold $M.$ In this
paper, we prove the equivalence between the following conditions:
\begin{itemize}
\item[(a)] $f$ belongs to the $C^1$-interior of the set of
volume-preserving diffeomorphisms which satisfy the weak shadowing
property.

\item[(b)] $f$ belongs to the $C^1$-interior of the set of
volume-preserving diffeomorphisms which satisfy the limit weak
shadowing property,
\item[(c)] $f$ is Anosov.
\end{itemize}
\end{abstract}

\maketitle


\section{Introduction}

Let $M$ be a closed $C^{\infty}$ $n$-dimensional Riemannian
manifold, and let ${\rm Diff}(M)$ be the space of diffeomorphisms
of $M$ endowed with the $C^1$-topology. Denote by $d$ the distance
on $M$ induced from a Riemannian metric $\|\cdot\|$ on the tangent
bundle $TM$. Let $f:M\to M$ be a diffeomorphism, and let
$\Lambda\subset M$ be a closed $f$-invariant set.

 For $\delta>0$, a sequence of points
$\{x_i\}_{i=a}^{b}(-\infty\leq a< b \leq \infty)$ in $M$ is called
a {\it $\delta$-pseudo orbit} of $f$ if $d(f(x_i),
x_{i+1})<\delta$ for all $a\leq i\leq b-1.$ For given $x, y\in M$,
we write $x\rightsquigarrow y$ if for any $\delta>0$, there is a
$\delta$-pseudo orbit $\{x_i\}_{i=a_{\delta}}^{b_{\delta}}(
a_{\delta}< b_{\delta})$ of $f$ such that $x_{a_{\delta}}=x$ and
$x_{b_{\delta}}=y.$ The set of points $\{x\in M:x\rightsquigarrow
x\}$ is called the {\it chain recurrent set} of $f$ and is denoted
by $\mathcal{CR}(f).$ If we denote the set of periodic points of
$f$ by $P(f)$, then $P(f)\subset\Omega(f)\subset\mathcal{CR}(f).$
Here $\Omega(f)$ is the non-wandering set of $f.$

 We say that $f$ has the {\it shadowing property} on $\Lambda$ if for any
 $\epsilon>0$ there is $\delta>0$ such that for any
 $\delta$-pseudo orbit $\{x_i\}_{i\in\Z}\subset\Lambda$ of $f$ there is $y\in M$
 such that $d(f^i(y), x_i)<\epsilon,$ for $i\in\Z.$
Note that in this definition, the shadowing point $y\in M$ is not
necessarily contained in $\Lambda.$ We say that $f$ has the {\it
$C^1$-interior shadowing property} if there is a
$C^1$-neighborhood $\mathcal{U}(f)$ of $f$ such that for any
$g\in\mathcal{U}(f)$, $g$ has the shadowing property.

The weak shadowing property was introduced in \cite{P}. The weak
shadowing property is investigated in \cite{Pl, P, PST, S1, S2}.
Every diffeomorphism having the shadowing property has the weak
shadowing property but the converse is not true. Indeed, an
irrational rotation on the unit circle has the weak shadowing
property but does not have the shadowing property.

Given $\epsilon>0,$ $\{x_i\}_{i\in\Z}$ is said to be {\it weakly
$\epsilon$-shadowed} by $y\in M$ if $\{x_i\}_{i\in\Z}\subset
B_{\epsilon}(\mathcal{O}_f(y)).$ Here $B_{\epsilon}(A)=\{y\in M:$
there is $x\in A$ such that $d(x, y)<\epsilon\}$ is the
$\epsilon$-neighborhood of a subset $A$ of $M.$ We say that $f$
has the {\it weak shadowing property} if for every $\epsilon>0$,
there is $\delta>0$ such that every $\delta$-pseudo orbit of $f$
can be weakly $\epsilon$-shadowed by some point. Note that $f$ has
the weak shadowing property if and only if $f^n$ has the weak
shadowing property for every $n\in\Z.$ We say that $f$ has the
{\it $C^1$-interior weak shadowing property} if there is a
$C^1$-neighborhood $\mathcal{U}(f)$ of $f$ such that for any
$g\in\mathcal{U}(f)$, $g$ has the weak shadowing property.

Now, we introduce the notion of the limit weak shadowing property
which is introduced in \cite{S3}. We say that $f$ has the {\it
limit weak shadowing property} if for every $\epsilon>0$ there is
$\delta>0$ such that for any $\delta$-limit pseudo orbit
$\{x_i\}_{i\in\Z},$ there exists $y\in M$ weakly
$\epsilon$-shadowing $\{x_i\}_{i\in\Z}$, and, if in addition,
$d(f(x_i), x_{i+1})\to 0$ as $i\to\pm\infty$ then
$d(\mathcal{O}_f(y), x_i)\to 0$ as $i\to\pm\infty.$ Clearly, the
limit weak shadowing property is stronger than the weak shadowing
property by definition. Note that $f$ has the limit weak shadowing
property if and only if $f^n$ has the limit weak shadowing
property for every $n\in\Z.$ We say that $f$ has the {\it
$C^1$-interior limit weak shadowing property} if there is a
$C^1$-neighborhood $\mathcal{U}(f)$ of $f$ such that for any
$g\in\mathcal{U}(f)$, $g$ has the limit weak shadowing property.

 Note that if $f$ is
topologically transitive then $f$ has the weak shadowing property
and $f$ has the limit weak shadowing property.

The shadowing property usually plays an important role in the
investigation of stability theory and ergodic theory.  Sakai
\cite{S1} showed that a diffeomorphism belonging to the
$C^1$-interior of the set of all diffeomorphisms on a $C^{\infty}$
closed surface with weak shadowing property satisfies both Axiom A
and the no-cycle condition. Thus we can restate the above facts as
follows.

\begin{thm}\label{thm00}Let $M$ be a closed two-dimensional manifold.
 A diffeomorphism $f$ belongs to the $C^1$-interior
weak shadowing property if and only if $f$ satisfies both Axiom A
and the no-cycle condition.
\end{thm}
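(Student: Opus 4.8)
The plan is to prove the two implications separately, treating the forward direction as essentially known and concentrating the work on the converse.

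For the forward implication, suppose $f$ lies in the $C^1$-interior of the set of diffeomorphisms with the weak shadowing property. On a $C^\infty$ closed surface this is exactly the hypothesis of Sakai's theorem \cite{S1}, which yields directly that $f$ satisfies Axiom A together with the no-cycle condition; so here it suffices to invoke that result. For the converse, assume $f$ is Axiom A and satisfies the no-cycle condition. First I would appeal to Smale's $\Omega$-stability theorem: Axiom A together with the no-cycle condition implies $\Omega$-stability, and $\Omega$-stability is a $C^1$-open property (a diffeomorphism $C^1$-close to an $\Omega$-stable one is $\Omega$-conjugate to it, hence again Axiom A with the no-cycle condition). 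Thus there is a $C^1$-neighborhood $\mathcal{U}(f)$ such that every $g\in\mathcal{U}(f)$ is Axiom A and satisfies the no-cycle condition, and it therefore remains only to show that any such $g$ has the weak shadowing property; this will place $f$ in the $C^1$-interior of the weak shadowing property.

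To prove that an Axiom A diffeomorphism $g$ with the no-cycle condition has the weak shadowing property, I would use the spectral decomposition $\Omega(g)=\Lambda_1\cup\cdots\cup\Lambda_m$ into hyperbolic, topologically transitive basic sets, together with an adapted filtration $\emptyset=M_0\subset M_1\subset\cdots\subset M_m=M$ furnished by the no-cycle condition, with $g(M_j)\subset\mathrm{int}(M_j)$ and $\Lambda_j=\bigcap_{n\in\Z}g^n(M_j\setminus M_{j-1})$. Fix $\epsilon>0$. For $\delta$ small enough the filtration is robust under $\delta$-jumps, so any $\delta$-pseudo orbit passes monotonically through the levels of the filtration and thus localizes, outside a bounded transition region, in shrinking neighborhoods of a descending chain $\Lambda_{j_0}\succ\Lambda_{j_1}\succ\cdots\succ\Lambda_{j_r}$ of basic sets, the absence of cycles guaranteeing the monotonicity. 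Within each basic set I would exploit transitivity and the internal shadowing of the hyperbolic piece to produce an orbit segment passing $\epsilon$-close to every pseudo orbit point sitting near that $\Lambda_{j_s}$, and I would splice consecutive segments along heteroclinic orbits in $W^u(\Lambda_{j_s})\cap W^s(\Lambda_{j_{s+1}})$, whose existence is precisely what the chain in the partial order encodes. Assembling these yields a single genuine orbit meeting every $\epsilon$-ball about a pseudo orbit point, i.e.\ a weakly $\epsilon$-shadowing point.

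The main obstacle is this last construction: producing one true orbit that comes $\epsilon$-close to \emph{all} points of the pseudo orbit at once, across several basic sets. The difficulty is that weak shadowing forbids matching times, so I cannot simply shadow each transition in sequence; instead I must show that the heteroclinic connections can be chosen to pass through the prescribed neighborhoods in the correct filtration order, and that the internal transitivity of each $\Lambda_{j_s}$ lets a single orbit sweep $\epsilon$-close to every relevant point before leaving that level. Controlling the transition regions—where the pseudo orbit is near neither one basic set nor the next—so that they too are $\epsilon$-covered by the spliced heteroclinic orbit is the delicate point, and it is exactly here that the no-cycle hypothesis (rather than the stronger strong-transversality condition needed for ordinary shadowing) is both necessary and sufficient, since weak shadowing tolerates the tangencies that would destroy genuine shadowing.
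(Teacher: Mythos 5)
Your forward implication is fine and coincides with what the paper does: Theorem~\ref{thm00} is not proved in the text at all, but is presented as a restatement of Sakai's results, with \cite{S1} supplying the implication ``$C^1$-interior weak shadowing $\Rightarrow$ Axiom~A and no-cycle.'' For the converse the paper likewise offers no argument of its own; the surrounding discussion indicates that it is to be extracted from \cite{S1} together with the fact, quoted from \cite{ST}, that an $\Omega$-stable diffeomorphism has the limit weak shadowing property (hence the weak shadowing property), combined with the $C^1$-openness of the Axiom~A plus no-cycle condition. Your reduction to ``every Axiom~A, no-cycle diffeomorphism has the weak shadowing property'' via openness of $\Omega$-stability is the right skeleton, but the step where you actually establish that claim is where your proposal breaks down.

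The gap is exactly the one you flag yourself and then do not close. First, a $\delta$-pseudo orbit may pass from a neighborhood of $\Lambda_{j_s}$ to a neighborhood of $\Lambda_{j_{s+1}}$ without $W^u(\Lambda_{j_s})\cap W^s(\Lambda_{j_{s+1}})$ being nonempty: the adapted filtration only forces the passage to be monotone in the partial order, it does not produce a genuine connecting orbit between consecutive basic sets visited by the pseudo orbit, so the heteroclinic orbits you propose to splice along need not exist. Second, even when such connections exist, a connecting orbit spends only a bounded number of iterates in the transition region, and you give no argument that it can be chosen to pass $\epsilon$-close to the arbitrarily prescribed pseudo-orbit points sitting there; asserting that transitivity inside each basic set plus ``some'' heteroclinic orbit suffices is not a proof. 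This is not a routine technicality: the sensitivity of the weak shadowing property to the geometry of heteroclinic intersections for $\Omega$-stable surface diffeomorphisms is precisely the subject of \cite{Pl} and \cite{PST}, so the burden of proof at this step is real. The repair is to abandon the hand-built splicing construction and instead invoke the quoted result of \cite{ST} (an $\Omega$-stable diffeomorphism has the limit weak shadowing property, which implies the weak shadowing property) together with the openness of $\Omega$-stability, which is the route the paper itself relies on.
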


Hence the $C^1$-interior weak shadowing property in the two
dimensional manifold is characterized as the $\Omega$-stability of
the system by Theorem \ref{thm1} $C^1$ non-empty open set
$\mathcal{U}\subset{\rm Diff}(\mathbb{T}^3)$ such that every
$g\in\mathcal{U}$ is topologically transitive but not Anosov. It
is easy to see that every $g\in\mathcal{U}$ has the weak shadowing
property but does not satisfy Axiom A and the no-cycle condition.
 In
\cite{ST}, the authors proved that an $\Omega$-stable
diffeoemorphism has the limit weak shadowing property. And Sakai
\cite{S3} showed that there is a diffeomorphism $f$ on 2 torus
belonging to the $C^1$-interior of the set of diffeomorphisms
possessing the limit weak shadowing property such that $f$ does
not satisfy the strong transversality condition. Thus we can
restate the above facts as follows.
\begin{thm}\label{thm01} Let $\mathbb{T}^2$ be the two dimensional torus.
 There is a diffeomorphism $f$ belongs to the $C^1$-interior
limit weak shadowing property  satisfying both Axiom A and the
no-cycle condition, but not the strong transversality condition.
\end{thm}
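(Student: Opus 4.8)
The plan is to exhibit a single diffeomorphism $f$ of $\mathbb{T}^2$ that is $\Omega$-stable but not structurally stable, and then to read off all three assertions from this one construction. The guiding principle is the pair of classical equivalences: a diffeomorphism is $\Omega$-stable if and only if it satisfies Axiom A together with the no-cycle condition, while it is structurally stable if and only if it satisfies Axiom A together with the strong transversality condition. Thus an $\Omega$-stable, non-structurally-stable $f$ automatically satisfies Axiom A and the no-cycle condition but fails strong transversality, which already delivers two of the three required properties.

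To obtain the $C^1$-interior limit weak shadowing conclusion, I would combine the result of \cite{ST}, that every $\Omega$-stable diffeomorphism has the limit weak shadowing property, with the fact that $\Omega$-stability is a $C^1$-open condition (being equivalent to Axiom A plus the no-cycle condition, which is $C^1$-open). Concretely, once $f$ is $\Omega$-stable there is a $C^1$-neighborhood $\mathcal{U}(f)$ of $f$, every element of which is again $\Omega$-stable and hence, by \cite{ST}, has the limit weak shadowing property. This places $f$ in the $C^1$-interior of the set of diffeomorphisms possessing the limit weak shadowing property, as defined in the introduction, giving the third property.

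It remains to produce the example itself, following the construction of Sakai \cite{S3}. I would start from a gradient-like (Morse--Smale) diffeomorphism of $\mathbb{T}^2$ whose non-wandering set consists of finitely many hyperbolic fixed points --- say a source, a sink and two saddles $p_1,p_2$, a configuration compatible with $\chi(\mathbb{T}^2)=0$ --- and then perform a localized $C^1$ perturbation that bends $W^u(p_1)$ so that it meets $W^s(p_2)$ in a single quadratic, hence non-transversal, tangency, taking care that the resulting heteroclinic behaviour runs only one way (so that $W^u(p_2)\cap W^s(p_1)=\emptyset$) and no cycle among the basic sets is created. One then checks that near this heteroclinic tangency no new non-wandering points appear, so that $\Omega(f)$ is still exactly the four hyperbolic fixed points; this keeps $f$ Axiom A with the no-cycle condition, while the tangency destroys strong transversality.

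The main obstacle I anticipate is precisely this verification step: confirming that introducing the tangency keeps $\Omega(f)$ finite and hyperbolic and keeps the no-cycle condition intact --- in sharp contrast with a \emph{homoclinic} tangency, which would produce a horseshoe and thereby destroy the finiteness of the non-wandering set and, generically, Axiom A itself. One must also verify that the tangency is a genuine obstruction to strong transversality for $f$, even though it is not itself $C^1$-robust; robustness is needed only of the property of being $\Omega$-stable, which survives because it is governed by Axiom A and the no-cycle condition rather than by transversality. Once the local normal form of the tangency and the absence of a cycle are pinned down, the remaining assembly --- applying \cite{ST} together with the openness of $\Omega$-stability --- is routine.
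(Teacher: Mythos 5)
Your proposal is correct and follows essentially the same route as the paper, which offers no self-contained proof but simply combines the result of \cite{ST} (every $\Omega$-stable diffeomorphism has the limit weak shadowing property) with Sakai's example in \cite{S3} of an $\Omega$-stable, non-strongly-transversal diffeomorphism of $\mathbb{T}^2$. Your reconstruction of that example via a one-way heteroclinic tangency between two saddles of a Morse--Smale system, together with the $C^1$-openness of Axiom A plus the no-cycle condition, is exactly the intended derivation, and it correctly identifies the only step needing care (that the tangency creates no new non-wandering points and no cycle).
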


By the theorem, even though a diffeomorphism is contained in the
$C^1$-interior of the set of diffeomorphisms possessing the limit
weak shadowing property, it does not necessarily satisfy the
strong transversality condition.

A periodic point $p$ of $f$ is {\it hyperbolic} if $Df^{\pi(p)}$
has eigenvalues with absolute values different of one, where
$\pi(p)$ is the period of $p.$ Denote by $\mathcal{F}(M)$ the set
of $f\in{\rm Diff}(M)$ such that there is a $C^1$-neighborhood
$\mathcal{U}(f)$ of $f$ such that for any $g\in\mathcal{U}(f),$
every $p\in P(g)$ is hyperbolic. It is proved that by Hayashi
\cite{H} that $f\in\mathcal{F}(M)$ if and only if $f$ satisfies
both Axiom A and the no-cycle condition.

Let $\Lambda$ be a closed $f\in{\rm Diff}(M)$-invariant set. We
say that $\Lambda$ is {\it hyperbolic} if the tangent bundle
$T_{\Lambda}M$ has a $Df$-invariant splitting $E^s\oplus E^u$ and
there exists constants  $C>0$ and $0<\lambda<1$ such that
$$\|D_xf^n|_{E_x^s}\|\leq C\lambda^n\;\;{\rm and}\;\;\|D_xf^{-n}|_{E_x^u}\|\leq C\lambda^{n} $$
for all $x\in \Lambda$ and $n\geq 0.$ If $\Lambda=M$ then we say
that $f$ is an {\it Anosov diffeomorphism}.

\section{Statement of the results}

A fundamental problem in differentiable dynamical systems is to
understand how a robust dynamic property on the underlying
manifold would influence the behavior of the tangent map on the
tangent bundle. For instance, in \cite{M}, Ma\~n\'e proved that
any $C^1$ structurally stable diffeomorphism is an Axiom A
diffeomorphism. And in \cite{Pa}, Palis extended this result to
$\Omega$-stable diffeomorphisms.

 Let $M$ be a compact $C^{\infty}$
$n$-dimensional Riemannian manifold endowed with a volume form
$\omega.$ Let $\mu$ denote the measure associated to $\omega,$
that we call Lebesgue measure, and let $d$ denote the metric
induced by the Riemannian structure. Denote by ${\rm
Diff}_{\mu}(M)$ the set of diffeomorphisms which preserves the
Lebesgue measure $\mu$ endowed with the $C^1$-topology. In the
volume preserving, the Axiom A condition is equivalent to the
diffeomorphism be Anosov, since $\Omega(f)=M$ by Poincar\'e
Recurrence Theorem. The purpose of this paper is to do this using
the robust property.

We define the set $\mathcal{F}_{\mu}(M)$ as the set of
doffeomorphisms $f\in{\rm Diff}_{\mu}(M)$ which have a
$C^1$-neighborhood $\mathcal{U}(f)\subset{\rm Diff}_{\mu}(M)$ such
that if for any $g\in\mathcal{U}(f)$, every periodic point of $g$
is hyperbolic. Note that
$\mathcal{F}_{\mu}(M)\subset\mathcal{F}(M)$(see \cite[Corollary
1.2]{AC}).

Very recently, Arbieto and Catalan \cite{AC} proved that if a
volume preserving diffeomorphism contained in
$\mathcal{F}_{\mu}(M)$ then it is Anosov.
 From the
above facts, we can restate as follows.

\begin{thm}\label{thm1} Any diffeomorphism in
$\mathcal{F}_{\mu}(M)$ is Anosov.
\end{thm}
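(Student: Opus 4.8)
The plan is to chain together the facts already recorded in the excerpt, reducing the statement to Hayashi's characterization of the star condition together with Poincar\'e recurrence. First I would invoke the inclusion $\mathcal{F}_{\mu}(M)\subset\mathcal{F}(M)$ noted just after the definition of $\mathcal{F}_{\mu}(M)$ (this is \cite[Corollary 1.2]{AC}): a volume-preserving diffeomorphism all of whose \emph{conservative} $C^1$-perturbations have only hyperbolic periodic points in fact has only hyperbolic periodic points under \emph{all} $C^1$-small perturbations. Granting this, from $f\in\mathcal{F}_{\mu}(M)$ we obtain $f\in\mathcal{F}(M)$. Applying Hayashi's theorem \cite{H}, which asserts that $f\in\mathcal{F}(M)$ if and only if $f$ satisfies both Axiom A and the no-cycle condition, we conclude that $f$ is an Axiom A diffeomorphism.

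The final step uses the conservative hypothesis. Because $f$ preserves the Lebesgue measure $\mu$, which is positive on every nonempty open set, the Poincar\'e Recurrence Theorem forces $\Omega(f)=M$, exactly as remarked at the start of Section 2. For an Axiom A diffeomorphism the non-wandering set is a hyperbolic set, so $\Omega(f)=M$ means that the whole manifold $M$ carries the $Df$-invariant hyperbolic splitting $E^{s}\oplus E^{u}$; that is, $f$ is Anosov. This completes the argument.

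The main obstacle is entirely contained in the inclusion $\mathcal{F}_{\mu}(M)\subset\mathcal{F}(M)$; the rest is a formal deduction. Were one to establish that inclusion from scratch, the essential tool is a volume-preserving version of Franks' lemma, which permits realizing, by a $C^1$-small \emph{measure-preserving} perturbation supported near a periodic orbit, a prescribed conservative perturbation of the derivative cocycle along that orbit. One then argues contrapositively: a periodic point that can be rendered non-hyperbolic by some general $C^1$-perturbation can already be rendered non-hyperbolic by a conservative one, so failure of the star condition in ${\rm Diff}(M)$ propagates to failure in ${\rm Diff}_{\mu}(M)$. The delicate point is to move the eigenvalues of $Df^{\pi(p)}$ onto the unit circle while keeping the perturbation inside ${\rm Diff}_{\mu}(M)$. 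Since this inclusion is available as \cite[Corollary 1.2]{AC}, the proof above simply cites it and proceeds.
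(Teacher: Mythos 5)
The paper does not actually prove Theorem \ref{thm1}: it is presented as a restatement of the main result of Arbieto--Catalan \cite{AC}, and the ``proof'' is the citation itself. Your argument is therefore a genuinely different route: you deduce the theorem from the inclusion $\mathcal{F}_{\mu}(M)\subset\mathcal{F}(M)$ together with Hayashi's characterization of $\mathcal{F}(M)$ as Axiom~A plus the no-cycle condition, and then upgrade Axiom~A to Anosov via $\Omega(f)=M$, which follows from Poincar\'e recurrence because Lebesgue measure is positive on every nonempty open set. Each implication in that chain is correct, and the final step is exactly the remark the author makes at the start of Section~2.

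The one substantive worry is circularity. The inclusion $\mathcal{F}_{\mu}(M)\subset\mathcal{F}(M)$ is cited as \cite[Corollary 1.2]{AC}; as its numbering suggests, in \cite{AC} it is a \emph{corollary of} the main theorem there --- which is precisely the statement you are proving (once $f\in\mathcal{F}_{\mu}(M)$ is known to be Anosov, it is a star diffeomorphism in the unrestricted sense, hence lies in $\mathcal{F}(M)$). So, traced back to its source, your only non-formal ingredient is a consequence of the theorem itself, and the argument begs the question. You flag the inclusion as ``the main obstacle,'' but the sketch you offer for proving it from scratch is also problematic: a general (non-conservative) $C^1$-perturbation can produce a nonhyperbolic periodic orbit whose derivative along the orbit has determinant of modulus different from one, and such linear data cannot be realized by the conservative Franks lemma, so the contrapositive transfer from ${\rm Diff}(M)$ to ${\rm Diff}_{\mu}(M)$ does not go through directly. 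The actual proof in \cite{AC} avoids $\mathcal{F}(M)$ and Hayashi entirely: it works inside ${\rm Diff}_{\mu}(M)$, using the conservative Franks lemma and Ma\~n\'e-type arguments to produce a dominated splitting with uniform hyperbolicity over the periodic points, and a conservative closing lemma to see that their closure is all of $M$. If you are content to treat \cite[Corollary 1.2]{AC} as a black box, your deduction is acceptable as a formal rearrangement of cited facts --- which is all the paper itself does --- but it should not be presented as a proof independent of \cite{AC}.
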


Very recently, Lee \cite{L} showed that if a volume preserving
diffeomorphisms on any dimensional manifold belongs to the
$C^1$-interior expansive or $C^1$-interior shadowing property,
then it is Anosov. As in the above theorems \ref{thm00} and
\ref{thm01}, we can't extend on any dimensional manifold. Thus, we
study the cases when a volume preserving diffeomorphism is in
$C^1$-interior weak shadowing property or $C^1$-interior limit
weak shadowing property on two-dimensional mainfold, then it is
Anosov. Let $int\mathcal{WS}_{\mu}(M)$ be denote the set of volume
preserving diffeomorphisms in ${\rm Diff}_{\mu}(M)$ satisfying the
weak shadowing property, and let $int\mathcal{LWS}_{\mu}(M)$ be
denote the set of volume-preserving diffeomorphisms in ${\rm
Diff}_{\mu}(M)$ satisfying the limit weak shadowing property. Main
thing to prove this paper is the following.

\begin{thm}\label{thm2} Let $M$ be a two-dimensional manifold, and let $f\in{\rm Diff}_{\mu}(M).$ We has that
$$int \mathcal{WS}_{\mu}(M)=
int\mathcal{LWS}_{\mu}(M)=\mathcal{AN}_{\mu}(M),$$ where
$\mathcal{AN}_{\mu}(M)$ is the set of Anosov volume preserving
diffeomorphisms in ${\rm Diff}_{\mu}(M)$.
\end{thm}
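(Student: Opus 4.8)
The plan is to establish the cycle of inclusions
\[
\mathcal{AN}_{\mu}(M)\subseteq int\mathcal{LWS}_{\mu}(M)\subseteq int\mathcal{WS}_{\mu}(M)\subseteq\mathcal{AN}_{\mu}(M),
\]
which forces the three sets to coincide. The two left-hand inclusions are soft. Since the limit weak shadowing property implies the weak shadowing property, the set of volume-preserving diffeomorphisms having the former is contained in the set having the latter; taking $C^{1}$-interiors yields $int\mathcal{LWS}_{\mu}(M)\subseteq int\mathcal{WS}_{\mu}(M)$. For the first inclusion, a volume-preserving Anosov diffeomorphism satisfies $\Omega(f)=M$ by the Poincar\'e Recurrence Theorem; combined with the spectral decomposition and the connectedness of $M$, this makes $f$ topologically transitive, so $f$ has both the weak and the limit weak shadowing properties. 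As the Anosov property is $C^{1}$-open inside ${\rm Diff}_{\mu}(M)$, every nearby volume-preserving diffeomorphism is again transitive Anosov and hence shares both properties; thus $f$ lies in the $C^{1}$-interior of each, which is the inclusion $\mathcal{AN}_{\mu}(M)\subseteq int\mathcal{LWS}_{\mu}(M)$.

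Everything then reduces to the remaining inclusion $int\mathcal{WS}_{\mu}(M)\subseteq\mathcal{AN}_{\mu}(M)$, and here I would route through the set $\mathcal{F}_{\mu}(M)$. By Theorem \ref{thm1} one has $\mathcal{F}_{\mu}(M)\subseteq\mathcal{AN}_{\mu}(M)$, so it is enough to prove
\[
int\mathcal{WS}_{\mu}(M)\subseteq\mathcal{F}_{\mu}(M).
\]
I would argue by contraposition. If $f\notin\mathcal{F}_{\mu}(M)$, then in every $C^{1}$-neighborhood of $f$ inside ${\rm Diff}_{\mu}(M)$ there is a $g$ carrying a non-hyperbolic periodic point $p$ of some period $k$. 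As $M$ is a surface and $g$ preserves volume, $|\det Dg^{k}(p)|=1$, so the two eigenvalues of $Dg^{k}(p)$ have product of modulus one; non-hyperbolicity then forces both onto the unit circle, i.e.\ they equal $\pm1$ or form a conjugate pair $e^{\pm i\theta}$. It is crucial here that the interior is taken inside ${\rm Diff}_{\mu}(M)$, so that Sakai's Theorem \ref{thm00} cannot be invoked directly and all perturbations must remain volume-preserving.

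The core of the argument is to promote this single degenerate orbit into an outright failure of weak shadowing that still sits $C^{1}$-close to $f$. Using the conservative (volume-preserving) Franks' Lemma, I would first perturb $g$, without leaving the prescribed neighborhood of $f$, so that along the orbit of $p$ the derivative has, after passing to a suitable power, the real eigenvalue $1$; a further $C^{1}$-small volume-preserving local modification (a Moser/pasting normal form near the orbit) then yields a diffeomorphism $g'$, still arbitrarily $C^{1}$-close to $f$, possessing a nontrivial arc $J$ of periodic points through $p$ on which the map is close to the identity. Fixing a small $\epsilon$ with ${\rm diam}(J)>2\epsilon$, I would then build, for each $\delta>0$, a $\delta$-pseudo orbit that travels from one endpoint of $J$ to the other by tiny jumps between nearby periodic points, and show that its closure---essentially all of $J$---cannot be contained in the $\epsilon$-neighborhood of the orbit of any single point, since true orbits meeting a neighborhood of $J$ are too rigid to sweep an arc longer than $2\epsilon$. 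This denies $g'$ the weak shadowing property; but $g'$ lies in the neighborhood witnessing $f\in int\mathcal{WS}_{\mu}(M)$ and therefore must have weak shadowing, a contradiction. Hence $f\in\mathcal{F}_{\mu}(M)$, closing the cycle.

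The main obstacle is exactly this last construction within the conservative category. In the dissipative setting weak shadowing is typically defeated by exploiting an attractor/repeller pair, but Poincar\'e recurrence makes every point non-wandering, so that mechanism is unavailable and the degeneracy of $J$ must be exploited directly. The delicate point is controlling the behavior of genuine orbits that pass near $J$: with both eigenvalues equal to $1$ the local model is parabolic, so nearby points can drift slowly \emph{along} the arc, and one must verify that any such orbit either stays within an $\epsilon$-ball or leaves the neighborhood of $J$ altogether---in either case failing to weakly $\epsilon$-shadow the traversing pseudo-orbit. Keeping every perturbation simultaneously volume-preserving and $C^{1}$-close to $f$, which is why the conservative Franks' Lemma and a measure-preserving local normal form are indispensable, is the other recurring technical burden.
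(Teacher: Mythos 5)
Your proposal follows essentially the same route as the paper: the soft inclusions via transitivity of volume-preserving Anosov diffeomorphisms and $C^1$-openness of the Anosov property, and the hard inclusion $int\mathcal{WS}_{\mu}(M)\subseteq\mathcal{F}_{\mu}(M)\subseteq\mathcal{AN}_{\mu}(M)$ via the conservative Franks' lemma, a Moser-type volume-preserving normal form producing an invariant arc on which the perturbed map is the identity, and a pseudo-orbit traversing that arc to defeat weak shadowing, with Theorem \ref{thm1} (Arbieto--Catalan) supplying $\mathcal{F}_{\mu}(M)\subseteq\mathcal{AN}_{\mu}(M)$. The only organizational difference is that you get the limit weak shadowing case for free from $int\mathcal{LWS}_{\mu}(M)\subseteq int\mathcal{WS}_{\mu}(M)$, whereas the paper runs the perturbation argument separately for both properties.
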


\section{Proof of Theorem \ref{thm2}}
Let $M$ be a closed $C^{\infty}$ $n$-dimensional Riemannian
manifold, and let ${\rm Diff}(M)$ be the space of diffeomorphisms
of $M$ endowed with the $C^1$-topology. Denote by $d$ the distance
on $M$ induced from a Riemannian metric $\|\cdot\|$ on the tangent
bundle $TM$. Let $f:M\to M$ be a diffeomorphism, and let
$\Lambda\subset M$ be a closed $f$-invariant set. From now, we
study relation between  a normally hyperbolic(see \cite{HPS}) and
the weak shadowing property as follows lemmas.

\begin{lem}\label{weaknormal}Let $f\in{\rm Diff}(M),$ and
let $\Delta\subset\Lambda$ be a normally hyperbolic $f$-invariant
submanifold of $M.$ Suppose that $f$ has the weak shadowing
property on $\Lambda$. Then the shadowing point is in $\Delta.$
\end{lem}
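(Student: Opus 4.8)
The plan is to read off from the normal hyperbolicity of $\Delta$ a local product structure near $\Delta$ and then use it to push any weak shadowing point back onto $\Delta$. First I would record the standard consequences of normal hyperbolicity (see \cite{HPS}): there are a compact tubular neighbourhood $V$ of $\Delta$, $Df$-invariant continuous strong stable and strong unstable foliations of $V\cap W^s_{\mathrm{loc}}(\Delta)$ and $V\cap W^u_{\mathrm{loc}}(\Delta)$, and a continuous projection $\pi\colon V\to\Delta$ along the strong stable leaves. The three facts I want to extract are: (i) $\Delta$ is locally maximal, so after shrinking $V$ one has $\bigcap_{n\in\Z}f^{n}(V)=\Delta$; (ii) the transverse dynamics is exponential, so if a forward orbit segment $z,f(z),\dots,f^{N}(z)$ stays in $V$ then $d(f^{n}(z),f^{n}(\pi(z)))$ is $O(\epsilon)$ for $0\le n\le N$ once $z\in B_{\epsilon}(\Delta)$; and (iii) $W^s_{\mathrm{loc}}(\Delta)\cap W^u_{\mathrm{loc}}(\Delta)=\Delta$.

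Next I would fix $\epsilon>0$ small enough that $\overline{B_{\epsilon}(\Delta)}\subset V$, take the constant $\delta=\delta(\epsilon)$ supplied by the weak shadowing property of $f$ on $\Lambda$, and apply it to a $\delta$-pseudo orbit $\{x_i\}_{i\in\Z}\subset\Delta$. Weak shadowing on $\Lambda$ gives $y\in M$ with $\{x_i\}_{i\in\Z}\subset B_{\epsilon}(\mathcal{O}_f(y))$, so each $x_i\in\Delta$ is matched by an iterate $f^{n_i}(y)$ with $d(x_i,f^{n_i}(y))<\epsilon$; since $x_i\in\Delta$, every matched iterate lies in $B_{\epsilon}(\Delta)\subset V$. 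Thus the orbit of $y$ meets the tube $V$ infinitely often, and by choosing the pseudo orbit so that its forward and backward tails are each dense in $\Delta$ (which I can arrange after passing to a power $f^{k}$, using that weak shadowing is preserved under powers), I force these matched iterates to return to $V$ along both the positive and the negative time direction.

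Finally I would convert such a returning orbit into a genuine orbit inside $\Delta$. Picking one matched iterate $w=f^{n_0}(y)\in V$ whose forward orbit remains in $V$ for a long stretch (so by (ii) it shadows the $\Delta$-orbit of $\bar y=\pi(w)\in\Delta$ to within $O(\epsilon)$), I would show that $\mathcal{O}_f(\bar y)\subset\Delta$ already $2\epsilon$-covers $\{x_i\}_{i\in\Z}$; since weak shadowing is insensitive to replacing $\epsilon$ by a fixed multiple of $\epsilon$, this exhibits the shadowing point inside $\Delta$, as claimed. The hard part will be the bookkeeping of the excursions of the orbit of $y$ in and out of $V$: a priori the covering could be produced by many short passages through the tube rather than by one asymptotic approach, and normal hyperbolicity controls only those segments that stay in $V$. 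I expect to resolve this by combining local maximality (i) with the transversality (iii): a point of $V$ whose orbit keeps meeting $V$ densely in one time direction must approach $W^s_{\mathrm{loc}}(\Delta)$ or $W^u_{\mathrm{loc}}(\Delta)$, and then its stable (or unstable) footpoint in $\Delta$ inherits the covering, which is exactly the point of $\Delta$ we want.
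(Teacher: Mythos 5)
There is a genuine gap, and it sits exactly where you yourself flag ``the hard part.'' Weak shadowing only delivers $\{x_i\}_{i\in\Z}\subset B_{\epsilon}(\mathcal{O}_f(y))$: each $x_i$ is within $\epsilon$ of \emph{some} iterate $f^{n_i}(y)$, with no control whatsoever over the assignment $i\mapsto n_i$. Different $x_i$ may therefore be covered by iterates belonging to entirely different excursions of $\mathcal{O}_f(y)$ through the tube $V$, so projecting one long excursion to $\Delta$ via $\pi$ produces an orbit in $\Delta$ that $2\epsilon$-covers only those $x_i$ matched during that excursion, not all of $\{x_i\}_{i\in\Z}$. The device you propose to repair this --- that an orbit meeting $V$ ``densely in one time direction'' must approach $W^s_{\mathrm{loc}}(\Delta)$ or $W^u_{\mathrm{loc}}(\Delta)$ --- is not a consequence of normal hyperbolicity: local maximality $\bigcap_{n\in\Z}f^{n}(V)=\Delta$ constrains only orbits that \emph{remain} in $V$, and an orbit can re-enter $V$ infinitely often while staying a definite distance from both local invariant manifolds (recurrence along a homoclinic channel to $\Delta$, for instance). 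So the reduction to a single asymptotic approach is unproved, and with it your whole third step. Note also that even if it did close, you would obtain only that \emph{some} point of $\Delta$ weakly $2\epsilon$-shadows, whereas the lemma as invoked later asserts that the shadowing point itself lies in $\Delta$; and in the normally attracting case the literal statement is actually false (a point on a strong stable leaf off $\Delta$ can weakly shadow), so any correct argument must exploit the repelling normal behaviour rather than a stable projection $\pi$.

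For comparison, the paper's own argument is much more elementary and goes the opposite way: it uses normal hyperbolicity to produce $\epsilon_1>0$ and $k>0$ such that every point of $B_{\epsilon_1}(\Delta)\setminus\Delta$ is sent farther than $\epsilon_1$ from $\Delta$ by $f^{k}$, and then argues that a shadowing point off $\Delta$ would leave some $x_k$ uncovered; no foliations, projections, or local product structure appear. You need not adopt that route, but your alternative as written does not close the central bookkeeping step, and that step is the entire content of the lemma.
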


\begin{proof} Suppose that $f$ has the weak shadowing
property on $\Lambda$. For any $\epsilon>0$, let
$B_{\epsilon}(\Delta)$ be the $\epsilon$-neighborhood of $\Delta.$
Since $\Delta$ is a normally hyperbolic,  we can choose $k>0$ and
$\epsilon_1>0$ such that for any $x\in
B_{\epsilon_1}(\Delta)\setminus\Delta,$ $d(f^k(x), \Delta)>
\epsilon_1.$ Let $0<\delta<\epsilon_1$ be the number of the weak
shadowing property of $f|_{\Lambda}$ for $\epsilon_1.$ Since $f$
has the weak shadowing property on $\Lambda,$ $f$ must have the
weak shadowing property on $\Delta.$ Thus for any $\delta$-pseudo
orbit $\{x_i\}_{\in\Z}\subset\Delta$ of $f,$ we can find a point
$y\in B_{\epsilon_1}(\Delta)$ such that $\{x_i\}_{i\in\Z}\subset
B_{\epsilon_1}(\mathcal{O}_f(y)).$ But, if $y\in
B_{\epsilon_1}(\Delta)\setminus \Delta$ then from the above facts,
we can choose $k>0$ such that $d(x_k,
\mathcal{O}_f(y))>\epsilon_1.$ This is a contradiction. Thus if
$f$ has the weak shadowing property on $\Lambda,$ then the
shadowing point is in $\Delta.$

\end{proof}
\begin{lem}\label{weak}Let $f\in{\rm Diff}(M),$ and
let $\Delta\subset\Lambda$ be a normally hyperbolic $f$-invariant
submanifold of $M.$ Suppose that $f$ has the weak shadowing
property on $\Lambda$. Then if $\Delta$ is an arc or a disk, then
$f|_{\Delta}$ is not the identity map.
\end{lem}
\begin{proof}
Let $\Delta\subset\Lambda$ be a normally hyperbolic for $f,$ and
let $\Delta$ be an arc. Suppose that $f$ has the weak shadowing
property on $\Lambda$. We will use the method of proof by
contradiction. Assume that $f|_{\Delta}$ is an identity map. Let
$l={\rm diam}(\Delta).$ Take $\epsilon=l/4.$ Let
$0<\delta<\epsilon$ be the number of the weak shadowing property
of $f$. Then we construct $\delta$-pseudo orbit $\xi$ of $f$ as
follows; For fix $k>0$, choose distinct points $x_1, x_2, \ldots,
x_k \in\Delta$ such that
\begin{itemize}
\item[(a)] $d(x_i, x_{i+1})<\delta$ for $i=1,\ldots k-1,$
\item[(b)] $x_1=x$ and $d(x, x_k)>2\epsilon.$
\end{itemize}

Define $\xi=\{y_i\}_{i\in\Z}$ by $y_{ki+j}=x_j$ for $i\in\Z$ and
$j=0, 1, \ldots, k-1.$ Since $f$ has the weak shadowing property
on $\Lambda,$ by Lemma \ref{weaknormal}, we can find a point
$z\in\Delta$ such that $\{x_i\}_{i\in\Z}\subset
B_{\epsilon}(\mathcal{O}_f(z)).$ Since $f|_{\Delta}$ is the
indentity map, we can find $l>0$ such that $d(y_l,
\mathcal{O}_f(z))>\epsilon.$ This is a contradiction.
\end{proof}

 Let $M$ be a compact $C^{\infty}$
$n$-dimensional Riemannian manifold endowed with a volume form
$\omega$, and let $f\in{\rm Diff}_{\mu}(M).$
 To prove the results, we will use the following is the well-known
 Franks' lemma for the conservative case, stated and proved in
 \cite[Proposition 7.4]{BDP}.
 \begin{lem}\label{frank} Let $f\in {\rm Diff}^1_{\mu}(M)$, and
 $\mathcal{U}$ be a $C^1$-neighborhood of $f$ in ${\rm
 Diff}^1_{\mu}(M).$ Then there exist a $C^1$-neighborhood
 $\mathcal{U}_0\subset \mathcal{U}$ of $f$ and $\epsilon>0$ such
 that if $g\in\mathcal{U}_0$, any finite $f$-invariant set $E=\{x_1, \ldots,
 x_m\},$ any neighborhood $U$ of $E$ and any volume-preserving
 linear maps $L_j:T_{x_j}M\to T_{g(x_j)}M$ with
 $\|L_j-D_{x_j}g\|\leq\epsilon$ for all $j=1,\ldots, m,$ there is a
 conservative diffeomorphism $g_1\in\mathcal{U}$ coinciding with
 $f$ on $E$ and out of $U,$ and $D_{x_j}g_1=L_j$ for all $j=1,
 \ldots, m.$
\end{lem}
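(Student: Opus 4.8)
The plan is to realize the prescribed derivatives one point at a time by a purely local, volume-preserving perturbation, thereby reducing everything to the single statement that a special linear map close to the identity can be produced as the linearization, at a fixed point, of a compactly supported conservative diffeomorphism that is $C^1$-small. First I would reduce to $m=1$. Since $E$ is finite, I may shrink $U$ to a disjoint union of small balls $U_j\ni x_j$; a modification supported in $U_j$ changes neither the values on $E$ nor the behaviour outside $U$ and affects only the derivative at $x_j$, so the desired $g_1$ is the simultaneous application of the independent local modifications, one on each $U_j$. Thus it suffices, for a single $x=x_j$ and an arbitrarily small ball $W\subset U_j$ about $x$, to build a conservative $g_1$ with $g_1=g$ off $W$, $g_1(x)=g(x)$, $D_xg_1=L_j$, and $d_{C^1}(g_1,g)\le C\|L_j-D_xg\|$ for a constant $C$ uniform over $g\in\mathcal{U}_0$.

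Next I would pass to conservative charts. By the Dacorogna--Moser theorem I may choose $C^\infty$ charts near $x$ and near $g(x)$ in which $\omega$ becomes the standard volume on $\R^n$ and $x,g(x)\mapsto 0$. In them $g$ is a local volume-preserving map fixing $0$ with $D_0g=A\in\mathrm{SL}(n,\R)$, and $L_j$ becomes $L\in\mathrm{SL}(n,\R)$ with $\|L-A\|$ small. I then look for $g_1=g\circ h$, where $h$ is a compactly supported conservative diffeomorphism of $\R^n$ with $h(0)=0$ and $D_0h=A^{-1}L=:B$; because $g\in\mathcal{U}_0$ keeps $\|A^{-1}\|$ bounded, the matrix $B$ lies in $\mathrm{SL}(n,\R)$ as close to the identity as we wish once $\epsilon$ is small, and then $D_xg_1=AB=L$ while the value and support conditions are automatic from $h(0)=0$ and $h=\mathrm{id}$ off $W$.

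The heart of the argument is the production of $h$. Writing $B=\exp(\xi)$ for a small trace-free matrix $\xi$, the associated linear vector field $X(v)=\xi v$ is divergence-free (its divergence is $\operatorname{tr}\xi=0$) and fixes $0$. Then $\iota_X\omega$ is closed, so the Poincar\'e lemma gives locally an $(n-2)$-form $\eta$ (a \emph{stream function} when $n=2$, the case relevant to this paper) with $\iota_X\omega=d\eta$. I localize it by a bump $\chi$ equal to $1$ near $0$ and supported in $W$, and let $\tilde X$ be the field with $\iota_{\tilde X}\omega=d(\chi\eta)$. Then $\tilde X$ is \emph{automatically} divergence-free and compactly supported in $W$, and it coincides with $X$ where $\chi\equiv 1$, so $0$ is a fixed point with $D_0\tilde X=\xi$. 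Taking $h$ to be the time-one flow of $\tilde X$ yields a compactly supported conservative diffeomorphism with $D_0h=\exp(\xi)=B$, exactly as required.

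Finally I would collect the $C^1$ estimates and isolate the genuine difficulty. Because $X$ is linear, $\eta$ is quadratic, so $\chi\eta=O(\|\xi\|\,r^2)$ and the factor $r^2$ exactly absorbs the $\|D\chi\|=O(1/r)$ blow-up; hence $\|\tilde X\|_{C^1}=O(\|\xi\|)$ \emph{independently of} the radius $r$, and the time-one flow obeys $d_{C^1}(h,\mathrm{id})=O(\|\xi\|)=O(\|L-A\|)$. The main obstacle is precisely this uniformity: I must choose $\mathcal{U}_0$ and $\epsilon$ so that, simultaneously for every $g\in\mathcal{U}_0$ and every admissible $L_j$, the matrices $A=D_{x_j}g$ stay in a fixed compact subset of $\mathrm{SL}(n,\R)$ (so that $B=A^{-1}L_j$, and hence $\xi=\log B$, depend continuously and boundedly on the data) and the accumulated $C^1$-cost of the finitely many disjoint local modifications stays below the threshold defining membership in $\mathcal{U}$. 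Uniform bounds on $\|Dg\|$ and $\|Dg^{-1}\|$ over a $C^1$-neighborhood supply the compactness, and then shrinking $\epsilon$ controls every $\|\xi\|$; transporting $h$ back through the conservative charts and reassembling over $j=1,\dots,m$ completes the construction. This is the argument of Bonatti--D\'iaz--Pujals \cite[Proposition 7.4]{BDP}.
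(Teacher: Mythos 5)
The paper offers no proof of this lemma at all: it is quoted verbatim as the conservative Franks' lemma of Bonatti--D\'iaz--Pujals \cite[Proposition 7.4]{BDP}, so there is nothing in the paper to compare against. Your sketch is a correct reconstruction of the standard argument behind that citation --- reduction to one orbit point via disjoint supports, Dacorogna--Moser charts, and realization of a nearby special linear map as the time-one flow of a cut-off divergence-free (Hamiltonian, in dimension two) vector field, with the $r^{2}$ decay of the quadratic stream function absorbing the $1/r$ growth of the cut-off to give radius-independent $C^{1}$ bounds.
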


\begin{rk}\label{moser} Let $f\in{\rm Diff}^1_{\mu}(M).$ From the Moser's
Theorem(see \cite{Mo}), there is a smooth conservative change of
coordinates $\varphi_x:U(x)\to T_xM$ such that $\varphi_x(x)=0,$
where $U(x)$ is a small neighborhood of $x\in M.$
\end{rk}
\begin{pro}\label{ws} Let $M$ be a closed $C^{\infty}$ two-dimensional manifold. If $f\in int\mathcal{WS}_{\mu}(M),$ then
every periodic point of $f$ is hyperbolic.
\end{pro}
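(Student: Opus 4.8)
The plan is to argue by contradiction. Suppose $f\in int\mathcal{WS}_{\mu}(M)$ but $f$ has a non-hyperbolic periodic point $p$ of period $k=\pi(p)$. Fix a $C^{1}$-neighborhood $\mathcal{U}(f)\subset{\rm Diff}_{\mu}(M)$ such that every $g\in\mathcal{U}(f)$ is volume preserving and has the weak shadowing property. Since $\dim M=2$ and $f$ preserves $\mu$, the two eigenvalues of $D_{p}f^{k}$ multiply to $1$, so non-hyperbolicity forces both onto the unit circle: either a complex conjugate pair $e^{\pm i\theta}$ (the elliptic case) or a real double eigenvalue $\pm 1$ (the parabolic case, where $D_{p}f^{k}$ has trace $\pm 2$).

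First I would normalize the linear data at $p$ using the conservative Franks' lemma (Lemma \ref{frank}). In the parabolic case a conservative matrix whose trace lies in $(-2,2)$ is elliptic and can be taken $C^{1}$-close to $D_{p}f^{k}$, so Lemma \ref{frank} produces $g\in\mathcal{U}(f)$ agreeing with $f$ on $\mathcal{O}_{f}(p)$ for which $D_{p}g^{k}$ is a genuine rotation $R_{\theta}$. A second application of Lemma \ref{frank} lets me push $\theta$ to a rational multiple of $2\pi$, say $\theta=2\pi a/b$, so that $D_{p}g^{N}=\mathrm{Id}$ for $N=kb$. Throughout, $g$ stays in $\mathcal{U}(f)$ and is therefore volume preserving with the weak shadowing property; moreover $g$ has the weak shadowing property if and only if $g^{N}$ does.

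Next I would upgrade this infinitesimal rotation into a genuine invariant barrier. Working in Moser's conservative coordinates $\varphi_{p}$ around $p$ (Remark \ref{moser}), in which $g^{N}$ fixes the origin with derivative the identity, I would make one further $C^{1}$-small volume-preserving perturbation, supported in a small neighbourhood of $p$, that forces $g^{N}$ to fix pointwise a small embedded circle $C$ encircling $p$. Such a $C$ is $g^{N}$-invariant and separates a neighbourhood of $p$ into an inner disk $D$ and an outer region, each of which is $g^{N}$-invariant. The contradiction then comes from the fact that an invariant circle is a barrier for true orbits but not for pseudo-orbits: choosing $q_{-}$ deep inside $D$ and $q_{+}$ in the outer region, each at distance greater than $2\epsilon$ from $C$, I would string together a $\delta$-pseudo-orbit of $g^{N}$ crossing $C$ from $q_{-}$ to $q_{+}$ in steps of size less than $\delta$. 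No point $y$ can weakly $\epsilon$-shadow it, since $\mathcal{O}_{g^{N}}(y)$ lies entirely in $\bar{D}$, entirely in the closed outer region, or on $C$, and in each case misses the $\epsilon$-neighbourhood of one of $q_{\pm}$; this violates the weak shadowing property of $g^{N}$, the desired contradiction.

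I expect the main obstacle to be precisely this third step: turning the linear rational-rotation data into an honest volume-preserving $C^{1}$-small perturbation producing an embedded invariant circle. Realizing the prescribed derivative is routine via Lemma \ref{frank}, but passing from the infinitesimal rotation to a genuine separating curve while keeping the perturbation both conservative and $C^{1}$-small requires a careful cutoff in Moser coordinates. I note that one might instead hope to route the contradiction through Lemma \ref{weak}, by exhibiting a normally hyperbolic arc on which $g$ is the identity; however, in the conservative two-dimensional setting the determinant constraint forces the normal eigenvalue to equal the tangential one, so no such arc can be normally hyperbolic, and the invariant-circle barrier appears to be the robust substitute.
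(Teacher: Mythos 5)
Your argument is essentially correct, but it reaches the contradiction by a genuinely different route from the paper. The paper also starts from the conservative Franks' lemma (Lemma \ref{frank}) and Moser coordinates (Remark \ref{moser}) to replace $g$ near $p$ by its linearization; it then works with the invariant \emph{arc} $\mathcal{J}_p$ in the eigendirection of an eigenvalue $\lambda=1$, asserts that this arc is normally hyperbolic, and combines Lemma \ref{weaknormal} (the weakly shadowing point must lie in the normally hyperbolic invariant submanifold) with Lemma \ref{weak} (the identity on an arc cannot have the weak shadowing property), reducing the complex-eigenvalue case to this one by perturbing to a rational rotation much as you do. You instead arrange in all cases that $D_pg^{N}=\mathrm{Id}$, extract a pointwise-fixed embedded circle $C$ around $p$, and use $C$ as a topological barrier: a $\delta$-pseudo-orbit crossing $C$ cannot be weakly $\epsilon$-shadowed because every true orbit stays on one side of $C$. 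Your closing observation is the key difference and is well taken: on a surface, volume preservation forces both eigenvalues of a non-hyperbolic $D_pg^{\pi(p)}$ onto the unit circle, so the arc $\mathcal{J}_p$ is not normally hyperbolic in any dominated sense, and the paper's appeal to Lemmas \ref{weaknormal}--\ref{weak} is the fragile point of its proof; your separation argument bypasses normal hyperbolicity entirely. Conversely, the step you flag as the ``main obstacle'' --- producing the invariant circle --- is already delivered by the very linearization the paper performs: once $g_1=\varphi_p^{-1}\circ D_pg\circ\varphi_p$ on $B_\alpha(p)$ and $(D_pg)^{N}=\mathrm{Id}$, every sufficiently small circle about $p$ in these coordinates is fixed pointwise, so no further cutoff or perturbation is required. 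The only details you should still write out are the orientation-reversing case ($\det D_pf^{\pi(p)}=-1$, handled by passing to $f^{2\pi(p)}$) and the fact that an elliptic element of $SL(2,\R)$ need not itself be a rotation but is linearly conjugate to one, which is enough to make a suitable power of the perturbed derivative equal to the identity.
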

\begin{proof}Take $f\in int\mathcal{WS}_{\mu}(M),$ and
$\mathcal{U}(f)$ a $C^1$-neighborhood of $f\in
int\mathcal{E}_{\mu}(M).$ Let $\epsilon>0$ and
$\mathcal{V}(f)\subset\mathcal{U}_0(f)$ corresponding number and
$C^1$-neighborhood given by Lemma \ref{frank}. We will derive a
contradiction, we may assume that there exists a nonhyperbolic
periodic point $p\in P(g)$ for some $g\in\mathcal{V}(f).$ To
simplify the notation in the proof, we may assume that $g(p)=p.$
Then there is at least one eigenvalue $\lambda$ of $D_pg$ such
that $|\lambda|=1.$

 By making use of the Lemma \ref{frank}, we
linearize $g$ at $p$ with respect to Moser's Theorem; that is, by
choosing $\alpha>0$ sufficiently small we construct $g_1$
$C^1$-nearby $g$ such that
$$g_1(x)=\left\{%
\begin{array}{ll}
    \varphi^{-1}_p\circ D_pg\circ \varphi_p(x) &\mbox{if} \quad x\in B_{\alpha}(p), \\
    g(x) & \mbox{if}\quad x\notin B_{4\alpha}(p). \\
\end{array}%
\right.$$ Then $g_1(p)=g(p)=p.$

First, we may assume that $\lambda\in\R$ with $\lambda=1.$ Let $v$
be the associated non-zero eigenvector such that $\|v\|=\alpha/4.$
Then we can get a small arc $\mathcal{I}_v=\{tv:-1\leq t\leq
1\}\subset \varphi_p(B_{\alpha}(p)).$ Take $\epsilon_1=\alpha/8$.
Let $0<\delta<\epsilon$ be the number of the weak shadowing
property of $g_1.$ Then by our construction of $g_1,$
$\varphi^{-1}_p(\mathcal{I}_v)\subset B_{\alpha}(p).$ Then, it is
clear that $\varphi^{-1}_p(\mathcal{I}_v)$ is a normally
hyperbolic for $g_1.$ Put
$\mathcal{J}_p=\varphi_p(\mathcal{I}_v).$ For the above
$\delta>0$, we construct $\delta$-pseudo orbit
$\xi=\{x_i\}_{i\in\Z}\subset \mathcal{J}_p$ as follows; For fix
$k\in\Z,$ choose distinct points $x_0=p, x_1, x_2, \ldots, x_k$ in
$\mathcal{J}_p$ such that
\begin{itemize}
\item[(a)]$d(x_i, x_{i+1})<\delta$ for $i=0, 1, \ldots, k-1,$
\item[(b)]$d(x_{-i-1}, x_{-i})<\delta$ for $i=0, \ldots, k-1,$
\item[(c)]$x_0=x$ and $d(x_{-k}, x_k)>2\epsilon_1.$
\end{itemize}
Now we define $\xi=\{x_i\}_{i\in\Z}$ by $x_{ki+j}=x_j$ for
$i\in\Z$ and $j=-k-1, -k-2,\ldots, -1, 0, 1, \ldots, k-1.$ Since
$g_1$ has the weak shadowing property, $g_1|_{\mathcal{J}_p}$ must
have the weak shadowing property. Thus we can find a point $y\in
M$ such that $\{x_i\}_{i\in\Z}\subset
B_{\epsilon_1}(\mathcal{O}_{g_1}(y)).$ For any
$v\in\mathcal{I}_v,$ $\varphi^{-1}_p(v)\in \mathcal{J}_p\subset
B_{\alpha}(p)$ and
$$g_1(\varphi^{-1}_p(v))=\varphi^{-1}_p\circ
D_pg\circ\varphi_p(\varphi^{-1}_p(v)).$$ Then
$g_1(\varphi^{-1}_p(v))=\varphi^{-1}_p(v).$ Thus
$g^l_1({\mathcal{J}_p})=\mathcal{J}_p$ for some $l>0.$

Since $g_1$ has the weak shadowing property, by Lemma
\ref{weaknormal}, the point $y\in\mathcal{J}_p.$ But, by Lemma
\ref{weak}, the identity map does not have the weak shadowing
property. Thus $g_1|_{\mathcal{J}_p}$ does not have the weak
shadowing property.

Finally,  if $\lambda\in\C,$ then to avoid the notational
complexity, we may assume that $g(p)=p.$ As in the first case, by
Lemma \ref{frank}, there are $\alpha>0$ and $g_1\in
\mathcal{V}(f)$ such that $g_1(p)=g(p)=p$ and

$$g_1(x)=\left\{%
\begin{array}{ll}
    \varphi^{-1}_p\circ D_pg\circ \varphi_p(x) &\mbox{if} \quad x\in B_{\alpha}(p), \\
    g(x) & \mbox{if}\quad x\notin B_{4\alpha}(p). \\
\end{array}%
\right.$$

With a $C^1$-small modification of the map $D_pg$, we may suppose
that there is $l>0$(the minimum number) such that $D_pg^l(v)=v$
for any $v\in\varphi_p(B_{\alpha}(p))\subset T_pM.$ Then, we can
go on with the previous argument in order to reach the same
contradiction. Thus, every periodic point of $f\in
int\mathcal{WS}_{\mu}(M)$ is hyperbolic.
\end{proof}
For any $\epsilon>0$, let $B_{\epsilon}(\Delta)$ be the
$\epsilon$-neighborhood of $\Delta.$
\begin{lem}\label{limitweaknormal}Let $f\in{\rm Diff}(M),$ and
let $\Delta\subset\Lambda$ be a normally hyperbolic $f$-invariant
submanifold of $M.$ Suppose that $f$ has the limit weak shadowing
property on $\Lambda$. Then the shadowing point is in $\Delta.$
\end{lem}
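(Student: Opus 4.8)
The plan is to mirror the argument of Lemma \ref{weaknormal}, exploiting the fact that the limit weak shadowing property contains the weak $\epsilon$-shadowing condition as part of its very definition. Indeed, if $f$ has the limit weak shadowing property on $\Lambda$, then for every $\epsilon>0$ there is $\delta>0$ such that every $\delta$-pseudo orbit in $\Lambda$ is weakly $\epsilon$-shadowed by some point $y\in M$; the additional convergence clause of the limit version plays no role in locating the shadowing point. Thus it suffices to show that this $y$ must lie on $\Delta$.

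First I would invoke the normal hyperbolicity of $\Delta$: for a suitable small $\epsilon_1>0$ there exists $k>0$ such that every $x\in B_{\epsilon_1}(\Delta)\setminus\Delta$ satisfies $d(f^k(x),\Delta)>\epsilon_1$. In words, a point lying near $\Delta$ but off it is pushed a definite distance away under the fixed iterate $f^k$, which is precisely the geometric content of normal hyperbolicity. I would then take $0<\delta<\epsilon_1$ to be the constant furnished by the weak $\epsilon_1$-shadowing part of the limit weak shadowing property, and, since $\Delta$ is an $f$-invariant submanifold, construct a $\delta$-pseudo orbit $\{x_i\}_{i\in\Z}\subset\Delta$ entirely inside $\Delta$. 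The limit weak shadowing property then yields $y\in M$ with $\{x_i\}_{i\in\Z}\subset B_{\epsilon_1}(\mathcal{O}_f(y))$.

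The heart of the argument is the contradiction step. Because the pseudo orbit lies in $\Delta$ and $y$ weakly $\epsilon_1$-shadows it, the orbit $\mathcal{O}_f(y)$ must meet $B_{\epsilon_1}(\Delta)$, so $y$ may be taken in $B_{\epsilon_1}(\Delta)$. If $y\notin\Delta$, then $y\in B_{\epsilon_1}(\Delta)\setminus\Delta$, and the escape property above forces $d(f^k(y),\Delta)>\epsilon_1$ for the fixed $k$; iterating, the orbit of $y$ cannot remain within $\epsilon_1$ of the pseudo orbit lying in $\Delta$, producing some index $j$ with $d(x_j,\mathcal{O}_f(y))>\epsilon_1$ and contradicting the weak $\epsilon_1$-shadowing. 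Hence $y\in\Delta$.

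The step I expect to require the most care is the passage from \emph{$\mathcal{O}_f(y)$ stays $\epsilon_1$-close to a pseudo orbit inside $\Delta$} to \emph{the whole orbit of $y$ remains trapped in $B_{\epsilon_1}(\Delta)$}; once that trapping is established, the uniform escape from normal hyperbolicity delivers the contradiction exactly as in Lemma \ref{weaknormal}. Since the convergence refinement of the limit weak shadowing property is not needed, the only genuinely new ingredient relative to Lemma \ref{weaknormal} is verifying that the weak-shadowing clause of the limit version is available verbatim, which is immediate from the definition.
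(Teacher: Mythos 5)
Your proposal follows essentially the same route as the paper: both discard the convergence clause, use only the weak $\epsilon_1$-shadowing part of the limit weak shadowing property, and derive the contradiction from the normally hyperbolic escape estimate $d(f^k(x),\Delta)>\epsilon_1$ for $x\in B_{\epsilon_1}(\Delta)\setminus\Delta$, exactly as in Lemma \ref{weaknormal}. The only point to keep straight is that the limit weak shadowing property is invoked for $\delta$-limit pseudo orbits rather than arbitrary $\delta$-pseudo orbits, which is how the paper phrases it; with that adjustment your argument coincides with the paper's.
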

\begin{proof}
Suppose that $f$ has the limit weak shadowing property on
$\Lambda$. Since $\Delta$ is a normally hyperbolic,  we can choose
$k>0$ and $\epsilon_1>0$ such that for any $x\in
B_{\epsilon_1}(\Delta)\setminus\Delta,$ $d(f^k(x), \Delta)>
\epsilon_1.$ Let $0<\delta<\epsilon_1$ be the number of the limit
weak shadowing property of $f|_{\Lambda}$ for $\epsilon_1.$ Since
$f$ has the limit weak shadowing property on $\Lambda,$ $f$ must
have the limit weak shadowing property on $\Delta.$ Thus for any
$\delta$-limit pseudo orbit $\{x_i\}_{\in\Z}\subset\Delta$ of $f,$
we can find a point $y\in B_{\epsilon_1}(\Delta)$ such that
$\{x_i\}_{i\in\Z}\subset B_{\epsilon_1}(\mathcal{O}_f(y))$ then
and $d(x_i, \mathcal{O}_f(y))\to\infty$ as $i\to\pm\infty.$ But,
if $y\in B_{\epsilon_1}(\Delta)\setminus \Delta$ then from the
above facts, we can choose $k>0$ such that $d(x_k,
\mathcal{O}_f(y))>\epsilon_1.$ This is a contradiction. Thus if
$f$ has the limit weak shadowing property on $\Lambda,$ then the
shadowing point is in $\Delta.$
\end{proof}
\begin{lem}\label{limitweak}Let $f\in{\rm Diff}(M),$ and
let $\Delta\subset\Lambda$ be a normally hyperbolic $f$-invariant
submanifold of $M.$ Suppose that $f$ has the limit weak shadowing
property on $\Lambda$. Then if $\Delta$ is an arc or a disk, then
$f|_{\Delta}:\Delta\to\Delta$ is not the identity map.
\end{lem}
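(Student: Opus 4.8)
The plan is to argue by contradiction, mirroring the proof of Lemma \ref{weak} but invoking Lemma \ref{limitweaknormal} in place of Lemma \ref{weaknormal}. I treat the case in which $\Delta$ is an arc; the disk case is entirely analogous. Suppose, for contradiction, that $f|_{\Delta}$ is the identity map. Set $l = \mathrm{diam}(\Delta)$ and take $\epsilon = l/4$. Since $f$ has the limit weak shadowing property on $\Lambda$, let $0 < \delta < \epsilon$ be the number furnished by that property for this $\epsilon$.

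First I would construct a $\delta$-pseudo orbit that lives entirely in $\Delta$ and has diameter exceeding $2\epsilon$. Fixing $k > 0$, one can choose distinct points $x_0, x_1, \ldots, x_k \in \Delta$ with $d(x_i, x_{i+1}) < \delta$ for $0 \le i \le k-1$ and $d(x_0, x_k) > 2\epsilon$, which is possible because $\Delta$ has diameter $4\epsilon$. Repeating this finite block periodically yields a bi-infinite sequence $\xi = \{x_i\}_{i\in\Z} \subset \Delta$ that is a $\delta$-pseudo orbit of $f$. Because the first clause of the limit weak shadowing property is exactly the weak shadowing property, $\xi$ is weakly $\epsilon$-shadowed by some $y \in M$, that is, $\{x_i\}_{i\in\Z} \subset B_{\epsilon}(\mathcal{O}_f(y))$.

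Next I would apply Lemma \ref{limitweaknormal} to conclude that the shadowing point $y$ in fact lies in $\Delta$. Since $f|_{\Delta}$ is the identity, the orbit $\mathcal{O}_f(y)$ reduces to the single point $\{y\}$, so the inclusion $\xi \subset B_{\epsilon}(\mathcal{O}_f(y))$ would force both $x_0$ and $x_k$ to lie within distance $\epsilon$ of $y$. This contradicts $d(x_0, x_k) > 2\epsilon$, and the contradiction establishes that $f|_{\Delta}$ cannot be the identity.

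The main obstacle is guaranteeing that the shadowing point is trapped inside $\Delta$ rather than drifting into the surrounding normally hyperbolic neighborhood; this is precisely the content of Lemma \ref{limitweaknormal}, so once that lemma is in hand the argument is routine. A minor point worth recording is that the periodic sequence $\xi$ need not satisfy $d(f(x_i), x_{i+1}) \to 0$, so only the weak shadowing clause of the limit weak shadowing property is used; alternatively, one could build a genuine $\delta$-limit pseudo orbit converging to distinct endpoints of $\Delta$ as $i \to \pm\infty$ and instead invoke the limit shadowing clause, since $\mathcal{O}_f(y) = \{y\}$ cannot approach two different points simultaneously.
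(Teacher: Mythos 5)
Your overall strategy coincides with the paper's: argue by contradiction, take $\epsilon=\mathrm{diam}(\Delta)/4$, build a pseudo orbit inside $\Delta$ of diameter exceeding $2\epsilon$, use Lemma \ref{limitweaknormal} to force the shadowing point $y$ into $\Delta$, and then observe that since $f|_{\Delta}=\mathrm{id}$ the orbit $\mathcal{O}_f(y)$ is the single point $\{y\}$, whose $\epsilon$-neighborhood cannot contain two points more than $2\epsilon$ apart. That final mechanism is exactly the paper's contradiction.

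However, your main construction has a concrete defect: if you choose $x_0,\dots,x_k$ with $d(x_i,x_{i+1})<\delta$ and $d(x_0,x_k)>2\epsilon$ and then repeat this block periodically, the transition from $x_k$ back to $x_0$ at each block boundary has error $d(f(x_k),x_0)=d(x_k,x_0)>2\epsilon>\delta$ (recall $f$ is the identity on $\Delta$), so the resulting bi-infinite sequence is \emph{not} a $\delta$-pseudo orbit, and the weak shadowing clause cannot be applied to it. One standard repair is to traverse the chain back and forth ($x_0,\dots,x_k,x_{k-1},\dots,x_1$, repeated); the paper's own repair is different and cleaner for this lemma: it extends the finite chain by setting $x_{-k-j}=x_{-k}$ and $x_{k+j}=x_k$ for all $j\geq 0$, so the sequence is eventually constant at both ends, is a genuine $\delta$-limit pseudo orbit (the errors are eventually zero), and still spans more than $2\epsilon$. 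This is essentially the alternative you sketch in your last sentence, so the correct argument is already implicit in your write-up; but as stated, the primary construction fails at the block boundaries and should be replaced by the eventually-constant (or back-and-forth) version.
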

\begin{proof}Let $\Delta\subset\Lambda$ be a normally hyperbolic
for $f,$ and let $\Delta$ be an arc. Suppose that $f$ has the
limit weak shadowing property on $\Lambda$. We will use the method
of proof by contradiction. Assume that $f|_{\Delta}$ is an
identity map. Let $l={\rm diam}(\Delta).$ Take $\epsilon=l/4.$ Let
$0<\delta<\epsilon$ be the number of the limit weak shadowing
property of $f$. Then we construct $\delta$-limit pseudo orbit
$\xi$ of $f$ as follows; For fix $k>0$, choose distinct points
$x_0, x_1, x_2, \ldots, x_k \in\Delta$ such that
\begin{itemize}
\item[(a)] $d(x_i, x_{i+1})<\delta$ for $i=0,\ldots, k-1,$
\item[(b)] $d(x_{-i-1}, x_{-i})<\delta$ for $i=0, \ldots, k-1,$
\item[(c)] $x_{-k-j}=x_{-k}$ for $j\geq0,$ and $x_{k+j}=x_k$
for $j\geq0.$
\end{itemize}

 Then $\xi=\{\ldots, x_{-k}, x_{-k}, x_{-k+1}, \ldots, x_{-1}, x,
 x_1, \ldots, x_k, x_k, \ldots, \}$ is a $\delta$-limit pseudo
 orbit of $f.$ Clearly, $\xi\subset\Delta.$  Since $f$ has the weak shadowing property
on $\Lambda,$ by Lemma \ref{limitweaknormal}, we can find a point
$z\in\Delta$ such that $\{x_i\}_{i\in\Z}\subset
B_{\epsilon}(\mathcal{O}_f(z)),$ and $d(x_i,
\mathcal{O}_f(z))\to\infty,$ as $i\to\pm\infty.$ Since
$f|_{\Delta}$ is the indentity map, we can find $l>0$ such that
$d(y_l, \mathcal{O}_f(z))>\epsilon.$ This is a contradiction.

\end{proof}

\begin{pro}\label{lws}Let $M$ be a closed $C^{\infty}$ two-dimensional manifold. If $f\in int\mathcal{LWS}_{\mu}(M)$ then
every periodic point of $f$ is hyperbolic.
\end{pro}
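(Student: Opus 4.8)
The plan is to mirror the proof of Proposition \ref{ws} essentially line for line, replacing the two weak shadowing lemmas \ref{weaknormal} and \ref{weak} by their limit weak shadowing counterparts \ref{limitweaknormal} and \ref{limitweak}. First I would fix $f\in int\mathcal{LWS}_{\mu}(M)$ together with a $C^1$-neighborhood $\mathcal{U}(f)\subset{\rm Diff}_{\mu}(M)$ on which every diffeomorphism has the limit weak shadowing property, and let $\epsilon>0$ and $\mathcal{V}(f)\subset\mathcal{U}(f)$ be the constant and neighborhood supplied by the conservative Franks' lemma (Lemma \ref{frank}). Arguing by contradiction, I would assume that some $g\in\mathcal{V}(f)$ has a nonhyperbolic periodic point; after passing to the period and relabelling, I may take it to be a fixed point $p=g(p)$ carrying an eigenvalue $\lambda$ of $D_pg$ with $|\lambda|=1$.

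Next I would linearize. Using Lemma \ref{frank} together with the conservative Moser chart $\varphi_p$ of Remark \ref{moser}, I would build $g_1$ that is $C^1$-close to $g$, coincides with $\varphi_p^{-1}\circ D_pg\circ\varphi_p$ on $B_{\alpha}(p)$ and with $g$ outside $B_{4\alpha}(p)$, and still fixes $p$. The objective of this step is to manufacture inside $B_{\alpha}(p)$ a small $g_1$-invariant arc (or disk) that is pointwise fixed by a suitable iterate of $g_1$ and is normally hyperbolic for $g_1$, so that the two limit weak shadowing lemmas become applicable.

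The core case is $\lambda\in\R$ with $\lambda=1$: taking an eigenvector $v$ with $\|v\|=\alpha/4$ yields the arc $\mathcal{I}_v=\{tv:-1\le t\le 1\}$ and its image $\mathcal{J}_p=\varphi_p^{-1}(\mathcal{I}_v)\subset B_{\alpha}(p)$, which is normally hyperbolic and pointwise fixed by $g_1$. The genuinely new content relative to Proposition \ref{ws} is that the pseudo orbit I construct must be a $\delta$-\emph{limit} pseudo orbit: I would pick distinct points $x_0=p,x_{\pm 1},\dots,x_{\pm k}\in\mathcal{J}_p$ with consecutive distances less than $\delta$ and $d(x_{-k},x_k)>2\epsilon_1$, and then extend to a bi-infinite sequence that is \emph{eventually constant}, setting $x_{k+j}=x_k$ and $x_{-k-j}=x_{-k}$ for $j\ge 0$, exactly as in Lemma \ref{limitweak}. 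This eventual constancy is precisely what forces $d(g_1(x_i),x_{i+1})\to 0$ as $i\to\pm\infty$, so the limit weak shadowing hypothesis genuinely fires. By Lemma \ref{limitweaknormal} any weak shadowing point lies in $\mathcal{J}_p$, and then Lemma \ref{limitweak} shows that the identity on an arc cannot limit-weakly-shadow this orbit, which is the contradiction sought.

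Finally, for $\lambda\in\C$ I would, after a further $C^1$-small modification of $D_pg$ permitted by Lemma \ref{frank}, arrange $D_pg^{l}(v)=v$ for all $v$ in a small disk of $T_pM$ and some minimal $l>0$; then $g_1^{l}$ fixes a normally hyperbolic disk pointwise, and since the limit weak shadowing property passes to powers, the arc argument above applies verbatim to $g_1^{l}$. I expect the only delicate point to be the bookkeeping that simultaneously secures the normal hyperbolicity of $\mathcal{J}_p$ and the eventual constancy of the limit pseudo orbit, so that both Lemma \ref{limitweaknormal} and Lemma \ref{limitweak} are legitimately invoked; everything else is a faithful transcription of the weak shadowing argument.
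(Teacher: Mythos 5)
Your proposal follows the paper's own proof essentially step for step: the same Franks'-lemma linearization via the Moser chart, the same normally hyperbolic arc $\mathcal{J}_p$ pointwise fixed by $g_1$, the same eventually constant $\delta$-limit pseudo orbit, and the same appeal to Lemmas \ref{limitweaknormal} and \ref{limitweak} (with the power trick for the complex eigenvalue case). No substantive difference from the paper's argument; your emphasis on the eventual constancy making the sequence a genuine limit pseudo orbit is exactly the point the paper's construction relies on.
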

\begin{proof}Take $f\in int\mathcal{LWS}_{\mu}(M),$ and
$\mathcal{U}(f)$ a $C^1$-neighborhood of $f\in
int\mathcal{LWS}_{\mu}(M).$ Let $\epsilon>0$ and
$\mathcal{V}(f)\subset\mathcal{U}_0(f)$ corresponding number and
$C^1$-neighborhood given by Lemma \ref{frank}. To derive a
contradiction, we may assume that there exists a nonhyperbolic
periodic point $p\in P(g)$ for some $g\in\mathcal{V}(f).$ To
simplify the notation in the proof, we may assume that $g(p)=p.$
Then as in the proof of Proposition \ref{ws}, we can take
$\alpha>0$ sufficiently small, and a smooth map $\varphi_p:
B_{\alpha}(p)\to T_pM$. Form the above construction, we can make
an arc $\mathcal{J}_p\subset B_{\alpha}(p)$ and for
$g_1\in\mathcal{V}(f)$, $\mathcal{J}_p$ is a $g_1$-invariant
normally hyperbolic.  Take $\epsilon_1=({\rm
length}{\mathcal{J}_p})/4,$ let $0<\delta<\epsilon_1$ be the
number of the limit weak shadowing property for $g_1.$ Form now,
we construct $\delta$-limit pseudo orbit of $g_1$ as follows; For
fix $k>0$, choose distinct points $x_0=0, x_1,\ldots,
x_k\in\mathcal{J}_p$ such that
\begin{itemize}
\item[(a)] $d(x_i, x_{i+1})<\delta$ for $i=0, \ldots, k-1,$
\item[(b)] $d(x_{-k-i}, x_{-i})<\delta$ for $i=0, \ldots, k-1,$
\item[(c)] $x_0=x$ and $d(x_{-k}, x_k)>2\epsilon_1,$
\item[(d)] $x_{-k-j}=x_{-k}$ and $x_{k+j}=x_k$ for $j\geq0.$
\end{itemize}
Then $\xi=\{\ldots, x_{-k}, x_{-k}, x_{-k+1}, \ldots, x_{-1},
x_0=p, x_1, \ldots, x_k, x_k\ldots,\}$ is a $\delta$-limit pseudo
orbit of $g_1$ and $\xi\subset\mathcal{J}_p.$ Since
$\mathcal{J}_p$ is a normally hyperbolic for $g_1,$ by Lemma
\ref{limitweaknormal}, the shadowing point $y\in\mathcal{J}_p.$
Since $g_1|_{\mathcal{J}_p}$ is the identity map, By Lemma
\ref{limitweak}, $g_1$ does not have the limit weak shadowing
property on $\mathcal{J}_p.$ This is a contradiction.

Finally, if $\lambda\in\C,$ then as in the proof of Proposition
\ref{weak}, for $g_1\in\mathcal{V}(f),$ we can take $l>0$ such
that $D_pg^l_1(v)=v$ for any $v\in\varphi_p(B_{\alpha}(p))\subset
T_pM.$ Then from the previous argument in order to reach the same
contradiction. Thus, every periodic point of $f\in
int\mathcal{LWS}_{\mu}(M)$ is hyperbolic.

\end{proof}

\bigskip
\bigskip

\bigskip


\begin{thebibliography}{11}


\bibitem{AC}
A. Arbieto and T. Catalan, \emph{ Hyperbolicity in the volume
preserving senario}, prerpint.
\bibitem{BDP}
C. Bonatti, L. J. Di\'az and E. R. Pujals, \emph{A $C^1$-generic
dichotomy for diffeomorphism: weak forms of hyperbolicity or
infinitely many sinks or sources}, Ann. of Math., {\bf 116}(2003),
355-418.

\bibitem{F}
J. Franks, \emph{ Necessary conditions for stability of
diffeomorphisms}, Trans. Amer. Math. Soc. {\bf 158} (1971),
301-308.
\bibitem{H}
S. Hayashi,\emph{ Diffeomorphisms in $\mathcal{F}^1(M)$ satisfy
Axiom A }, Ergodic Thoery $\&$ Dynam. Syst., {\bf 12} (1992),
233-253.
\bibitem{HPS}
M. Hirsh, C. Pugh and M. Shub, \emph{Invariant manifods}, Lecture
Notes in Math. Springer-Verlag (1977).




\bibitem{M}
R. Ma\~n\'e,\emph{ A proof of the $C^1$-stability conjecture}.
Publ. Math. de IHES, {\bf 66}(1987),161-210.
\bibitem{Mo}
J. Moser, \emph{On the volume elements on a manifold}, Trans.
Amer. Math. Soc.,{\bf 120}(1965), 286-294.
\bibitem{L}
M. Lee, \emph{Volume preserving diffeomorphisms with expansive and
shadowing}, preprint.
\bibitem{Pa}
J. Palis,\emph{On the $C^1$ $\Omega$-stability conjecture}. Inst.
Hautes tudes Sci. Publ. Math.,{\bf66}(1988),211-215.
\bibitem{Pl}
O. Plamenevskaya, \emph{Weak shadowing for two-dimensional
diffeomorphisms}, Vestnik St. Petersburg Univ. Math., {\bf
31}(1998), 49-56.
\bibitem{P}
S. Pilyugin, "Shadowing in Dynamical Systems", Lecture Notes in
Math. Springer-Verlag, {\bf 1706}(1999).
\bibitem{PST}
S. Pilyugin, K. Sakai and O. Tarakanov.,\emph{Transversality
properties and $C^1$-open sets of diffeomorphisms with weak
shadowing}, Discrete and Contin. Dynam. Syst., {\bf 16}(2006),
871-882.



\bibitem{S1}
K. Sakai, \emph{Diffeomorphisms with weak shadwoing}, Fund. Math.,
{\bf11}(2001), 53-75.
\bibitem{S2}
K. Sakai, \emph{A note on weak shadowing}, Far East J. Dynam.
Sys., {\bf 3}(2001), 45-49

\bibitem{S3}
K. Sakai, \emph{Diffeomorphisms with limit weak shadowing},Trends
in Math.,{\bf 7}(2004), 19-27.
\bibitem{ST}
K. Sakai and O. Tarakanov, \emph{Limit weak shadowing property},
Far East J. Dynam. Sys., {\bf 7}(2005), 105-173.
 \end{thebibliography}
\end{document}